\documentclass[11pt]{article}

\usepackage{tonas-papers}

\newcommand{\NN}{\mathcal{N}}
\newcommand{\CC}{\mathcal{C}}
\newcommand{\DD}{\mathcal{D}}
\newcommand{\CF}{\mathcal{C}_F}

\graphicspath{{./images/}}

\begin{document}

\makeboxedtitle{No Countable Basis for Borel Directed Graphs of Dichromatic Number at Least Three}{
    Tonatiuh Matos-Wiederhold
}{}{
    I prove that the Borel directed graphs whose vertex set admits a partition into two Borel acyclic sets form a $\mathbf\Sigma^1_2$-complete set; equivalently, that deciding whether a Borel directed graph has Borel dichromatic number at least~$3$ is a $\mathbf\Pi^1_2$-complete problem.
    It follows that no countable family of Borel directed graphs can serve as a basis for this class under Borel homomorphism and, more generally, that any basis must be at least as complex as~$\mathbf\Pi^1_2$.

    The proof lifts the classical NP-completeness reduction of Bokal, Fijavž, Juvan, Kayll, and Mohar to the Borel setting, using the coding framework of Thornton.
    Combined with a straightforward reduction from undirected to directed coloring problems, this completes the picture for finite Borel chromatic and dichromatic thresholds: for every finite $k$, the set of Borel (directed) graphs admitting a Borel $k$-(di)coloring is $\mathbf\Sigma^1_2$-complete, and in particular admits no countable basis.
    This contrasts with the uncountable threshold, where a single-element basis exists for Borel chromatic number (Kechris--Solecki--Todorčević) and a continuum-size basis exists for Borel dichromatic number (Raghavan--Xiao).
}

\section{Introduction}\label{sec:borel-dichrom-intro}

The \emph{dichromatic number} of a directed graph was introduced by Neumann-Lara~\cite{NeumannLara1982} as a natural directed analogue of the chromatic number.
A \emph{dicoloring} of a directed graph~$D$ is a map $c\colon V(D)\to k$ such that every color class induces a subgraph without any directed cycles; the \emph{dichromatic number} $\vec\chi(D)$ is the least such~$k$.
Equivalently, $\vec\chi(D)$ is the minimum number of acyclic sets into which the vertex set can be partitioned.

This notion has attracted considerable attention in finite combinatorics.
Neumann-Lara~\cite{NeumannLara1982} established several foundational results, including analogues of classical lower bounds for the chromatic number.
Bokal, Fijavž, Juvan, Kayll, and Mohar~\cite{Bokal_2004_CircularChrom} later introduced the circular dichromatic number of a directed graph and proved that the decision problem of whether a directed graph $D$ satisfies $\vec\chi(D)\leq 2$ or not is NP-complete; this is in contrast with the usual chromatic number for (undirected) graphs, where one gets NP-completeness only after posing the question for three colors or more.

\subsection{Borel chromatic and dichromatic numbers}

In the Borel setting, we consider graphs and directed graphs whose vertex sets are standard Borel spaces and whose edge (or arc) relations are Borel, and we require the colorings to be Borel measurable.
The \emph{Borel chromatic number} $\chi_B(G)$ of a Borel graph~$G$ was introduced by Kechris, Solecki, and Todorčević~\cite{kechris1999borel}, who proved the celebrated \emph{$G_0$-dichotomy}: there exists a single Borel graph~$\mathbb{G}_0$ such that
\begin{align}\label{eq:G0-dichotomy}
    \chi_B(G) > \aleph_0\quad
    \Longleftrightarrow\quad
    \mathbb{G}_0 \leq_B G,
\end{align}
where $\leq_B$ denotes the existence of a Borel homomorphism $\mathbb{G}_0\to G$.
In other words, the graph~$\mathbb{G}_0$ forms a one-element \emph{basis} for the class of Borel graphs with uncountable Borel chromatic number.

This dichotomy breaks down when one moves from uncountable to merely infinite Borel chromatic number.
In \cite{Todorcevic_2021_ComplexProbBorel}, Todorčević and Vidnyánszky recently proved that the set of Borel graphs with infinite Borel chromatic number (equivalently, Borel chromatic number $\geq 4$ for closed subgraphs of the shift graph) is $\mathbf\Sigma^1_2$-complete.
In particular, no countable family of Borel graphs can serve as a basis for this class.
Moreover, their argument rules out the existence of a simple $G_0$-type dichotomy substituting $\aleph_0$ in \eqref{eq:G0-dichotomy} for any value in $3,4,5\dots$.
Interestingly, when substituting the value $2$, there again is a single Borel graph that makes a dichotomy like \eqref{eq:G0-dichotomy} hold.
This is known as the $L_0$~dichotomy and is the main result of \cite{L0paper}.

Very recently, Raghavan and Xiao~\cite{RaghavanXiao2024} established a dichotomy for the \emph{Borel dichromatic number} that generalizes the $\mathbb{G}_0$-dichotomy to directed graphs: they showed that a continuum-size family of directed graphs characterizes when a Borel directed graph has uncountable Borel dichromatic number.
This continuum-size basis consists of pairwise Borel-incomparable directed graphs, but it is unknown whether any smaller basis exists; in particular, the question of whether a \emph{countable} basis suffices is open.

One can interpret the existence of a small (i.e.\ singleton, or finite, or countable) basis as a sort of measure of complexity of the associated decision problem.
For instance, the existence of $\mathbb{G}_0$ says that deciding whether a Borel graph has uncountable Borel chromatic number is ``straightforward:'' one only has to verify one possible ill-behaved property, namely containing a homomorphic copy of $\mathbb{G}_0$, to give a positive answer; and in some sense, finding this homomorphic copy is the \emph{only} way of proving that a Borel graph has uncountable Borel chromatic number.
In contrast, deciding whether a Borel graph has infinite Borel chromatic number is a much more complex task, as no easily describable method exists for deciding this in terms of graph homomorphisms, and so different graphs probably require wildly different proving methods, if any.

The Borel dichromatic number can recover the Borel chromatic number in the following sense.

\begin{remark}\label{rmk:dichrom-recovers-chrom}
	Given an undirected graph $G$, let $\vec{G}$ denote the directed graph obtained by replacing each edge $\{u,v\}$ with two opposing arcs $(u,v)$ and $(v,u)$.
	Then $\vec\chi_B(\vec{G}) = \chi_B(G)$.
	Indeed, a subset of vertices is acyclic in $\vec{G}$ if and only if it is independent in~$G$: any directed cycle in $\vec{G}$ would in particular contain an arc $(u,v)$ with $\{u,v\}\in E(G)$, so a set containing both $u$ and $v$ is neither acyclic in $\vec{G}$ nor independent in~$G$; conversely, an independent set in~$G$ induces a directed graph in~$\vec{G}$ with no arcs at all, hence trivially acyclic.
	Moreover, the map $\mathrm{code}(G)\mapsto\mathrm{code}(\vec{G})$ is $\Delta^1_1$, since it is built from the edge set using products and unions (Lemma~\ref{lemma:toolkit}(1),(3)).
	It follows that the $\mathbf\Sigma^1_2$-completeness of Borel $k$-coloring of undirected graphs~\cite{Todorcevic_2021_ComplexProbBorel} implies the $\mathbf\Sigma^1_2$-completeness of Borel $k$-dicoloring of directed graphs, for every $k\geq 4$.
	The case $k=2$ is not covered by this reduction (since deciding 2-colorability of undirected graphs is trivial), and is precisely the content of Proposition~\ref{prop:main}.
\end{remark}

It is natural to ask whether the Todorčević--Vidnyánszky phenomenon also occurs in the directed setting at the remaining finite threshold, or if there is a directed analog for the $L_0$ dichotomy:

\begin{quote}
\emph{Is there a countable basis for the class of Borel directed graphs with Borel dichromatic number at least~$3$?}
\end{quote}

This paper gives a negative answer, thereby completing the picture for all finite thresholds of both the Borel chromatic and dichromatic numbers.

\subsection{Main results}

I work throughout with \emph{nice codes} in the sense of Thornton~\cite{thornton2022algebraic, thornton-thesis}; see Section~\ref{sec:framework} for a brief review.

\begin{proposition}\label{prop:main}
	The set of nice codes for Borel directed graphs admitting a Borel $2$-dicoloring is $\mathbf\Sigma^1_2$-complete.
	Equivalently, the set of nice codes for Borel directed graphs with Borel dichromatic number at least~$3$ is $\mathbf\Pi^1_2$-complete.
\end{proposition}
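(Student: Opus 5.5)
The upper bound is routine. A nice code $c$ for a Borel directed graph $D$ admits a Borel $2$-dicoloring if and only if there is a Borel code $b$ for a Borel set $A\subseteq V(D)$ such that both $A$ and $V(D)\setminus A$ are acyclic. For a fixed length $n$, the statement ``no $x_0,\dots,x_{n-1}\in A$ with $(x_i,x_{i+1 \bmod n})\in D$ for all $i$'' is $\Pi^1_1$ in $(b,c)$, because membership in a Borel-coded set is $\Delta^1_1$ relative to valid codes (Lemma~\ref{lemma:toolkit}). Taking the countable conjunction over $n$ is still $\Pi^1_1$. Adding ``$b$ is a valid code'' and the existential quantifier over $b$ gives $\mathbf\Sigma^1_2$.

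For hardness I would reduce from the $\mathbf\Sigma^1_2$-complete set of Todorčević--Vidnyánszky. Since $2$-colorability of undirected graphs is trivial, the right source is Borel $3$-colorability of (codes for) Borel graphs, which their result makes $\mathbf\Sigma^1_2$-complete. The reduction lifts the Bokal--Fijavž--Juvan--Kayll--Mohar gadget construction that proves $\vec\chi\le 2$ is NP-complete, which is a reduction from $3$-colorability. Their reduction is local: each vertex $v$ of $G$ is replaced by a bounded finite gadget $H_v$, and each edge $uv$ by a finite connector digraph attached to fixed ports of $H_u$ and $H_v$. The acyclic $2$-partitions of $H_v$, restricted to its ports, fall into exactly three ``types'' (the colors). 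A connector admits an acyclic $2$-partition extending the given port partitions if and only if the two types differ. To make this Borel, I would define $D(G)$ on $V(G)\times F$ for a fixed finite set $F$ (gadget vertices), together with extra vertices indexed by $E(G)\times F'$ (connector vertices). The arcs are given by explicit Borel formulas in $E(G)$. By Lemma~\ref{lemma:toolkit} (products, unions, images under Borel injections), the map $\mathrm{code}(G)\mapsto\mathrm{code}(D(G))$ is then $\Delta^1_1$, so it gives a continuous-enough reduction within Thornton's framework.

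Correctness needs two directions.
\begin{itemize}
\item If $c$ is a Borel $3$-coloring of $G$, fix for each color $i$ a partition of $H$ of type $i$, and for each ordered pair of distinct colors a compatible partition of the connector. Assigning these pointwise gives a Borel $2$-partition of $D(G)$. Each part must be acyclic. A directed cycle is finite, so it lives in a finite sub-digraph. The key combinatorial point, inherited from the finite proof, is that the gadgets can be chosen so that every directed cycle of $D(G)$ lies inside a single gadget or a single connector (for instance, by having connectors only receive arcs from ports). Then no cycle is monochromatic.
\item Conversely, given a Borel acyclic $2$-partition of $D(G)$, reading off the type of $H_v$ is a Borel function of $v$, since $F$ is finite. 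Adjacent vertices get different types, so this is a Borel $3$-coloring.
\end{itemize}

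The main obstacle is checking that the finite gadgets really have the ``cycles are local'' property, or, failing that, that every infinite configuration's cycles are still controlled. The original NP-completeness proof only needs finite $G$, where a global cycle through many gadgets might be excluded by an ad hoc argument. I would first try to verify directly from the BFJKM construction that any arc leaving a gadget enters a connector from which there is no directed path back out. If that fails, I would modify the connectors by orienting all gadget-to-connector arcs one way, so that $D(G)$ becomes locally finite over $G$ with only intra-component cycles.
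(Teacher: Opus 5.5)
Your upper bound is fine and matches the paper's. The hardness half has a genuine gap: it rests on gadgets you never produce. The Bokal--Fijav\v{z}--Juvan--Kayll--Mohar reduction that the paper lifts does not start from $3$-colorability. It starts from $2$-coloring of $3$-uniform hypergraphs. It places one six-vertex gadget $F$ on each sorted hyperedge; the three shared vertices of $F$ \emph{are} the hypergraph vertices, and its three auxiliary vertices are private (Lemma~\ref{lemma:gadget-property}). That construction contains no vertex gadget with ``exactly three port types'' and no edge connector. So the central objects of your reduction, and the type and connector properties you rely on, are missing. The paper avoids needing them by reducing from Thornton's $\mathbf\Sigma^1_2$-completeness of Borel $2$-colorability for locally finite $3$-uniform hypergraphs (Fact~\ref{fact:hyp-borel}), not from the Todor\v{c}evi\'c--Vidny\'anszky $3$-coloring result.

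The forward-direction obstacle you flag is real, but your formulation of it cannot be met, and your fallback breaks the reduction. A connector can only forbid equal types through directed cycles that pass through ports. The ports of $H_v$ are shared by all connectors at $v$, so cycles of $D(G)$ will in general run through many connectors; ``every cycle lies in one gadget or one connector'' is not available. Your fallback points all gadget--connector arcs into the connectors. Then no cycle meets both a port and a connector vertex, acyclicity of a $2$-partition splits into separate conditions on each gadget and each connector, and $D(G)$ is $2$-dicolorable regardless of $G$. This is also not an infinitary issue: finite instances face the same problem.

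What you actually need is weaker: every \emph{monochromatic} cycle, under a coloring that dicolors each local piece, stays inside one piece. For the paper's gadget this follows from its shape together with the sorting $v_1\prec v_2\prec v_3$. The shared vertices span the transitive tournament $a\to b$, $b\to c$, $a\to c$. Arcs leave $\{a,b,c\}$ only from $b,c$ and enter it only at $a,b$. So a monochromatic excursion through the auxiliary vertices of $F_e$ from $x$ to $y$ has $y=x$ or $y\to x$ an arc of $F_e$, which closes a monochromatic cycle inside $F_e$. Hence a monochromatic cycle in $D(H)$ could only use arcs between vertices of $V$, and these are all $\preceq$-increasing. This is what lets arbitrary local extensions be glued together in Claim~\ref{claim:forward}. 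Your scheme would need an analogous ordering device for its gadgets and connectors, and none is given.
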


\begin{corollary}\label{cor:no-basis}
	There is no countable family~$\mathbf{B}$ of Borel directed graphs that forms a basis for Borel directed graphs of Borel dichromatic number at least~$3$ under Borel homomorphism.
\end{corollary}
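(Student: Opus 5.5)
The corollary should follow from Proposition~\ref{prop:main} by a standard complexity computation: a countable basis would make the class of dichromatic number at least~$3$ definable by a formula that is too simple. Suppose, towards a contradiction, that $\mathbf{B}=\{B_n : n\in\omega\}$ is a countable basis. This means each $B_n$ is a Borel directed graph with $\vec\chi_B(B_n)\geq 3$, and every Borel directed graph $D$ with $\vec\chi_B(D)\geq 3$ admits a Borel homomorphism $B_n\to D$ for some~$n$. The first step is to check the converse direction. If $B_n\to D$ is a Borel homomorphism $h$ and $c$ is a Borel $2$-dicoloring of $D$, then $c\circ h$ is a Borel $2$-dicoloring of $B_n$. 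Indeed, a directed cycle in a color class of $B_n$ maps under $h$ to a closed directed walk inside one color class of~$D$, and any closed directed walk contains a directed cycle. (For loops this is immediate as well.) Hence
\[
\vec\chi_B(D)\geq 3 \iff \exists n\ \exists \text{ Borel } h \ (h\colon B_n\to D \text{ is a homomorphism}).
\]

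The second step is to bound the complexity of the right-hand side as a set of nice codes. Fix codes for the $B_n$; these countably many parameters are allowed in boldface classes. Then compute the complexity of the relation ``$h$ codes a Borel function that is a homomorphism from $B_n$ to the graph coded by $x$''. Using the toolkit of Lemma~\ref{lemma:toolkit} from the coding framework, this relation should be $\mathbf\Pi^1_1$, or at worst $\Delta^1_2$ in the codes. Two ingredients are needed:
\begin{itemize}
\item being a Borel code of a total function is $\mathbf\Pi^1_1$;
\item the homomorphism condition ``for all $(u,v)$ in the arc set of $B_n$, $(h(u),h(v))$ is in the arc set of $D$'' is a universal statement over a $\Delta^1_1$ matrix, uniformly in the codes, hence $\mathbf\Pi^1_1$.
\end{itemize}
Quantifying existentially over $h$ and then over~$n$ gives a $\mathbf\Sigma^1_2$ set. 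So the set of nice codes with $\vec\chi_B\geq 3$ would be $\mathbf\Sigma^1_2$.

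The final step combines this with Proposition~\ref{prop:main}. That set is $\mathbf\Pi^1_2$-complete, so it would be $\mathbf\Delta^1_2$. A $\mathbf\Pi^1_2$-complete set that is $\mathbf\Sigma^1_2$ would make every $\mathbf\Pi^1_2$ set $\mathbf\Sigma^1_2$, by continuous preimages. This contradicts the hierarchy theorem $\mathbf\Pi^1_2\neq\mathbf\Sigma^1_2$ on Polish spaces. One subtlety must be handled here: the set of nice codes itself must be simple enough, say $\mathbf\Pi^1_1$ or $\Delta^1_2$, to intersect with. This is provided by the framework of Section~\ref{sec:framework}.

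The main obstacle is this second step: verifying carefully that the homomorphism relation is uniformly $\mathbf\Pi^1_1$ in the codes. In particular, evaluating $h(u)$ for a Borel-coded $h$ and testing membership in a Borel-coded arc relation must be done with $\Delta^1_1$ uniformity. I would appeal to the toolkit lemma for images and preimages under coded functions. The same argument shows, more generally, that any basis must have complexity at least $\mathbf\Pi^1_2$.
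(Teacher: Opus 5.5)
Your proposal is correct and follows the paper's own route. A countable basis would write $\vec\chi_B(D)\geq 3$ as a countable union over $n$ of sets obtained by existentially quantifying over codes of Borel maps $h$ with a $\Pi^1_1$ homomorphism condition. That union is $\mathbf\Sigma^1_2$, contradicting the $\mathbf\Pi^1_2$-completeness from Proposition~\ref{prop:main}. Your extra checks are correct details the paper leaves implicit: that a Borel homomorphism from a basis element into $D$ forces $\vec\chi_B(D)\geq 3$ (the paper builds this into its biconditional), and that the code set is simple enough for the intersection to stay $\mathbf\Sigma^1_2$.
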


The proof of Proposition~\ref{prop:main} proceeds by adapting the classical NP-completeness reduction from 2-coloring of 3-uniform hypergraphs to 2-dicoloring of directed graphs~\cite{Bokal_2004_CircularChrom} to the Borel setting.
The key step is verifying that this reduction is $\Delta^1_1$ in the codes, following the template established by Thornton in~\cite[Appendix~A]{thornton-thesis} for a similar result about edge 3-coloring.
I remark that 2-dicoloring of directed graphs is \emph{not} a CSP in the sense of Thornton's algebraic framework~\cite{thornton2022algebraic}, since acyclicity is not a constraint expressible as a homomorphism to a fixed finite template; thus Proposition~\ref{prop:main} does not follow directly from Thornton's general results.

\subsection{Context and discussion}
 
The following table summarizes the basis landscape, contrasting the undirected and directed cases at different thresholds.

\medskip
\begin{center}
\begin{tabular}{lll}
	\hline
	\textbf{Class} & \textbf{Existence of small basis} & \textbf{Reference} \\
	\hline
	$\chi_B(G) > \aleph_0$ & Yes (size 1: $\mathbb{G}_0$) & \cite{kechris1999borel} \\
	$\chi_B(G) > 2$ & Yes (size 1: $\mathbb{L}_0$) & \cite{L0paper} \\
	$\chi_B(G) > k$, $k\geq 3$ & No ($\mathbf\Pi^1_2$-complete) & \cite{Todorcevic_2021_ComplexProbBorel} \\
	\hline
	$\vec\chi_B(D) > \aleph_0$ & Yes (size $\mathfrak{c}$) & \cite{RaghavanXiao2024} \\
	$\vec\chi_B(D) > k$, $k\geq 2$ & No ($\mathbf\Pi^1_2$-complete) & Prop.~\ref{prop:main}, Rmk.~\ref{rmk:dichrom-recovers-chrom} \\
	\hline
\end{tabular}
\end{center}
\medskip

\noindent
The picture reveals a notable difference between the undirected and directed settings at the uncountable threshold: a single graph suffices as a basis in the undirected case, while a continuum-size family is needed for directed graphs.
With finitely many colors, both settings exhibit the same completeness obstruction to a countable basis, though at different thresholds: the phenomenon begins at $k=3$ for undirected graphs and already at $k=2$ for directed graphs, mirroring the classical NP-completeness landscape.
The case $k=2$ for directed graphs, established in this paper, is the last remaining piece of this picture (see Remark~\ref{rmk:dichrom-recovers-chrom}).

Several questions are posed in Section~\ref{sec:questions}.

\subsection{Organization}
Section~\ref{sec:framework} reviews Thornton's coding framework.
Section~\ref{sec:classical} recalls the classical reduction from 3-uniform hypergraph 2-coloring to directed graph 2-dicoloring.
Section~\ref{sec:borel} carries out the Borel lifting.
Section~\ref{sec:questions} collects open questions.

\section{Coding framework}\label{sec:framework}

Briefly recall the coding apparatus from~\cite[Appendix~A]{thornton-thesis}; see also \cite[Section~4]{thornton2022algebraic}.
The reader already familiar with this framework may skip to Section~\ref{sec:classical}.

Fix a good $\omega$-parametrization $U\subseteq\omega\times\NN$ of $\Pi^1_1$ \cite[Theorem~A.5]{thornton2022algebraic}.

\begin{definition}[{\cite[Definition~A.6]{thornton2022algebraic}}]\label{def:simple-nice-codes}
	A \emph{simple code} for a $\Delta^1_1$ set~$B\subseteq\NN$ is a pair $(e,i)\in\omega^2$ with $U_e = \NN\setminus U_i = B$.

	A \emph{nice coding} is a triple $(\CC,\DD^\Pi,\DD^\Sigma)$ where:
	\begin{enumerate}[label=(\roman*)]
		\item $\CC\subseteq\omega$ is $\Pi^1_1$ (the set of \emph{codes});
		\item $\DD^\Pi\subseteq\omega\times\NN$ is $\Pi^1_1$ and $\DD^\Sigma\subseteq\omega\times\NN$ is $\Sigma^1_1$;
		\item for every $e\in\CC$, $\DD^\Pi_e = \DD^\Sigma_e$;
		\item every $\Delta^1_1$ set $B\subseteq\NN$ has a code $e\in\CC$ with $B = \DD^\Pi_e$;
		\item there are recursive functions translating between simple and nice codes in both directions.
	\end{enumerate}
\end{definition}

Fix nice codings for $\Delta^1_1(\NN^k)$ for all~$k$, writing $\DD_e$ for $\DD^\Pi_e$.

\begin{lemma}[{\cite[Lemma~A.7]{thornton2022algebraic}}]\label{lemma:toolkit}
	Fix a $\Delta^1_1$ linear order~$\preceq$ on~$\NN$.
	The following operations are $\Delta^1_1$ in the codes:
	\begin{enumerate}[label=(\arabic*)]
		\item $(A,B)\mapsto A\cup B$;
		\item $(A,B)\mapsto A\cap B$;
		\item $(A,B)\mapsto A\times B$;
		\item $A\mapsto\dom(A)$, for relations with countable sections;
		\item $(A,f)\mapsto f(A)$, for countable-to-one~$f$;
		\item $A\mapsto$ the enumeration function of the finite sections of~$A$ along~$\preceq$.
	\end{enumerate}
\end{lemma}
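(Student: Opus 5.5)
The plan is to reduce every operation to a manipulation of \emph{simple} codes, where it becomes a recursive index computation, and then transport it back through the recursive translations of Definition~\ref{def:simple-nice-codes}(v). Concretely, given nice codes $a,b\in\CC$, translate them recursively to simple codes $(e_A,i_A)$ and $(e_B,i_B)$. I then produce, via the $s$-$m$-$n$ property of the good parametrization~$U$, recursive functions giving simple codes for the output set, and translate back. A composition of recursive maps, restricted to the $\Pi^1_1$ set of valid inputs, is in particular $\Delta^1_1$ in the codes. So for each item it suffices to exhibit the output set as $\Pi^1_1$ and its complement as $\Pi^1_1$, both uniformly in the input indices.

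For items (1)--(3) this is immediate. $A\cup B$ is $U_{e_A}\cup U_{e_B}$, and its complement is $U_{i_A}\cap U_{i_B}$. Since $\Pi^1_1$ is closed under finite unions and intersections uniformly, $s$-$m$-$n$ gives recursive $g,h$ with $U_{g(e_A,e_B)}=U_{e_A}\cup U_{e_B}$ and $U_{h(i_A,i_B)}=U_{i_A}\cap U_{i_B}$. Intersection is dual to union. For products, $A\times B=\{(x,y):x\in U_{e_A},\,y\in U_{e_B}\}$, with complement $\{(x,y): x\in U_{i_A}\ \text{or}\ y\in U_{i_B}\}$; both are uniformly $\Pi^1_1$ via the recursive pairing of $\NN^k$.

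For (4), $x\in\dom(A)$ iff $\exists y\,(x,y)\in A$. This is $\Sigma^1_1$ uniformly, using the $\Sigma^1_1$ description of $A$ given by the complement index. For the $\Pi^1_1$ description I would use that each section $A_x$ is a countable $\Delta^1_1(x,e_A,i_A)$ set. By the effective perfect set theorem, every member of $A_x$ is then $\Delta^1_1(x,e_A,i_A)$, so
\[
x\in\dom(A)\iff \exists y\in\Delta^1_1(x,e_A,i_A)\;(x,y)\in U_{e_A}.
\]
The right-hand side is $\Pi^1_1$ by the Spector--Gandy/Kleene restricted-quantification theorem. Item (5) is handled the same way: $y\in f(A)\iff\exists x\,(f(x)=y\wedge x\in A)$ is $\Sigma^1_1$. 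Because $f$ is countable-to-one, the fibre over $y$ is countable and $\Delta^1_1$ in $y$ and the codes, so the existential quantifier may be restricted to $\Delta^1_1(y,\text{codes})$ reals, giving $\Pi^1_1$. Equivalently, (5) follows from (3) and (4) applied to $\{(y,x): f(x)=y,\ x\in A\}$.

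For (6), the graph of the enumeration is the set of $(x,n,y)$ such that $y\in A_x$ and exactly $n$ elements of $A_x$ strictly precede~$y$ in~$\preceq$. Since $A_x$ is finite, all its elements are $\Delta^1_1$ in $x$ and the codes. The statement ``exactly $n$'' is then a finite Boolean combination of restricted existential quantifiers over $\Delta^1_1(x,\ldots)$ reals, together with the $\Delta^1_1$ relation~$\preceq$. Hence both the graph and its complement are uniformly $\Pi^1_1$.

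The main obstacle is the uniformity in (4)--(6). One must check that the restricted-quantification theorem yields a $\Pi^1_1$ set whose index depends recursively on $(e_A,i_A)$, and that the effective perfect set bound is applied with the codes as parameters. I would first try to apply Kleene's theorem in relativized form, with the indices treated as integer parameters. The resulting $\Pi^1_1$ subset of $\omega^2\times\NN$ then yields the recursive index map by $s$-$m$-$n$ in the good parametrization.
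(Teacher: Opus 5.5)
The paper gives no proof of this lemma (it is quoted from Thornton's Lemma~A.7), and your argument is a correct version of the standard proof: pass to simple codes via the recursive translations of Definition~\ref{def:simple-nice-codes}(v), get (1)--(3) from closure of $\Pi^1_1$ plus $s$-$m$-$n$, and get (4)--(6) from the effective perfect set theorem combined with Kleene's restricted-quantification theorem (Moschovakis~4D.3), applied to $(e,x,y)\in U$ with the index as an integer argument so that uniformity is automatic. In (6), state explicitly that ``at least $m$ elements of $A_x$ precede $y$'' is $\Delta^1_1$ because it has both an unrestricted $\Sigma^1_1$ form (via $i_A$) and a restricted $\Pi^1_1$ form (via $e_A$); having both forms is what lets the graph and its complement each be written as $\Pi^1_1$.
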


This lemma can typically be used as a black box: to verify that a construction is $\Delta^1_1$ in the codes, it suffices to express it as a composition of the preceding operations.

\section{The classical reduction}\label{sec:classical}

Now, I recall the reduction from 2-coloring of 3-uniform hypergraphs to 2-dicoloring of directed graphs.
The NP-completeness of 2-dicoloring follows by combining this reduction with the well-known NP-completeness of 2-coloring 3-uniform hypergraphs.
The reduction is due to Bokal et al.~\cite{Bokal_2004_CircularChrom}; I present it in a form convenient for the Borel lifting.

\begin{fact}\label{fact:hyp-np}
	The problem of deciding whether a 3-uniform hypergraph admits a 2-coloring is NP-complete.
\end{fact}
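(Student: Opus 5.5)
I will prove the Fact in the standard way: membership in NP is immediate, and for hardness I will chain polynomial-time reductions from 3-SAT. Throughout, a $2$-coloring of a hypergraph is one in which no hyperedge is monochromatic. For membership, a certificate is a map $c\colon V\to\{0,1\}$. Checking that no hyperedge is monochromatic takes time linear in the number of hyperedges.

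For hardness I will use the intermediate problem NOT-ALL-EQUAL 3-SAT (NAE-3SAT). Its instances are CNF formulas with at most three literals per clause. An instance is satisfiable if some assignment gives every clause at least one true and at least one false literal.

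\textbf{Step 1: 3-SAT reduces to NAE-3SAT.} Introduce a single fresh variable $z$.
\begin{itemize}
\item First turn the $3$-SAT formula into a NAE-$4$-SAT formula by replacing each clause $C=(\ell_1\vee\ell_2\vee\ell_3)$ with $(\ell_1,\ell_2,\ell_3,z)$. If $\sigma$ satisfies the original formula, then extending it by $z=\text{false}$ is NAE-satisfying. Conversely, NAE-satisfying assignments are closed under global complementation, so we may assume $z$ is false, and then every clause has a true $\ell_i$.
\item Next split each $4$-clause $(\ell_1,\ell_2,\ell_3,z)$ into the $3$-clauses $(\ell_1,\ell_2,y_C)$ and $(\neg y_C,\ell_3,z)$, where $y_C$ is a fresh variable. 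This split preserves NAE-satisfiability in both directions: given a NAE-assignment of the $4$-clause, choose $y_C$ to repair whichever half needs it.
\end{itemize}

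\textbf{Step 2: NAE-3SAT reduces to 2-coloring hypergraphs with edges of size $2$ or $3$.}
\begin{itemize}
\item Take one vertex for each literal.
\item Add the edge $\{x,\neg x\}$ for each variable $x$.
\item For each clause, add the set of its literals as an edge. Clauses of size $1$ make the instance trivially unsatisfiable, so output a fixed non-$2$-colorable hypergraph in that case; duplicated literals shrink an edge and are harmless.
\end{itemize}
Colorings correspond to truth assignments, since the size-$2$ edges force complementary literals to receive different colors. Under this correspondence, "not monochromatic" is exactly the NAE condition.

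\textbf{Step 3 (the only genuinely new point): pad the $2$-edges to make the hypergraph $3$-uniform.} For each $2$-edge $\{u,v\}$, add three fresh vertices $a,b,c$ and replace $\{u,v\}$ by the hyperedges
\[
\{u,v,a\},\quad \{u,v,b\},\quad \{u,v,c\},\quad \{a,b,c\}.
\]
\begin{itemize}
\item If $u$ and $v$ receive the same color, the first three edges force $a,b,c$ to all take the other color, and then $\{a,b,c\}$ is monochromatic. So every $2$-coloring of the padded hypergraph separates $u$ and $v$.
\item Conversely, if $u$ and $v$ have different colors, the first three edges are already fine, and we can color $a,b,c$ non-monochromatically.
\end{itemize}
Hence the padded $3$-uniform hypergraph is $2$-colorable if and only if the original one is. All three steps are clearly polynomial-time. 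Combined with the NP-completeness of 3-SAT (Cook--Levin), this gives the Fact.

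The main thing to get right is the correctness of Step 1, in particular the complementation-symmetry argument that lets us assume $z$ is false. Everything else is routine gadget checking.
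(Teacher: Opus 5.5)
The paper does not prove this Fact. It invokes it as the well-known NP-completeness of $2$-coloring $3$-uniform hypergraphs. Only its Borel counterpart (Fact~\ref{fact:hyp-borel}, due to Thornton) is used in the proof of Proposition~\ref{prop:main}, so the finite statement only motivates the gadget.

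Your proposal is correct. It is a self-contained version of the standard route: 3-SAT to NAE-3SAT to set splitting with sets of size at most $3$. Each step checks out:
\begin{itemize}
\item the complementation argument that lets you assume $z$ is false;
\item the split of each $4$-clause via $y_C$, which preserves NAE-satisfiability in both directions;
\item the literal-vertex encoding with the edges $\{x,\neg x\}$;
\item the padding gadget $\{u,v,a\},\{u,v,b\},\{u,v,c\},\{a,b,c\}$, which forces $u$ and $v$ apart and otherwise extends.
\end{itemize}
The padding step is exactly what is needed to reach genuinely $3$-uniform instances.

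One cosmetic fix: the ``fixed non-$2$-colorable hypergraph'' you output in the degenerate case should itself be $3$-uniform. The complete $3$-uniform hypergraph on $5$ vertices or the Fano plane will do; alternatively, pass it through Step~3.

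Compared with the paper's bare citation, your version buys self-containment, while the citation buys brevity. The citation is reasonable here, since nothing in the Borel argument depends on the finite proof.
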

 
\begin{definition}[Template gadget]\label{def:gadget}
	Let $$T_{\mathrm{shared}}:=\{a,b,c\}\quad T_{\mathrm{aux}}:=\{a',b',c'\}\quad T:=T_{\mathrm{shared}}\cup T_{\mathrm{aux}}.$$
	The \emph{template gadget}~$F$ is the directed graph on vertex set~$T$ with arc set~$E_T$ as depicted in Figure~\ref{fig:gadget}.
\end{definition}

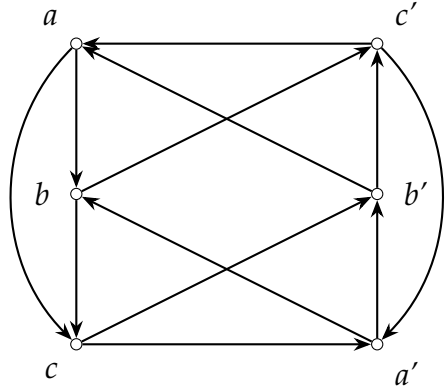
\begin{figure}[ht]
    \centering

\begin{tikzpicture}[
    >=Stealth, 
    node distance=2.5cm,
    dot/.style={circle, draw, fill=white, inner sep=1.5pt},
    every node/.style={font=\large\itshape}
]

    \node[dot] (a) at (0, 4) {};
    \node[dot] (b) at (0, 2) {};
    \node[dot] (c) at (0, 0) {};
    
    \node[dot] (cp) at (4, 4) {};
    \node[dot] (bp) at (4, 2) {};
    \node[dot] (ap) at (4, 0) {};

    \node[above left=2pt of a]  {a};
    \node[left=4pt of b]        {b};
    \node[below left=2pt of c]  {c};
    
    \node[above right=2pt of cp] {c'};
    \node[right=4pt of bp]       {b'};
    \node[below right=2pt of ap] {a'};

    \draw[->, thick] (a) -- (b);
    \draw[->, thick] (b) -- (c);
    \draw[->, thick] (ap) -- (bp);
    \draw[->, thick] (bp) -- (cp);

    \draw[->, thick] (cp) -- (a);
    \draw[->, thick] (c) -- (ap);
    \draw[->, thick] (bp) -- (a);
    \draw[->, thick] (b) -- (cp);
    \draw[->, thick] (ap) -- (b);
    \draw[->, thick] (c) -- (bp);

    \draw[->, thick] (a) to[bend right=45] (c);
    \draw[->, thick] (cp) to[bend left=45] (ap);

\end{tikzpicture}
    \caption{The template gadget~$F$. Shared vertices $\{a,b,c\}$ correspond to hypergraph vertices; auxiliary vertices $\{a',b',c'\}$ are local to each gadget copy.}
    \label{fig:gadget}
\end{figure}

\begin{lemma}[Gadget property]\label{lemma:gadget-property}
	A map $\sigma\colon\{a,b,c\}\to 2$ extends to a 2-dicoloring of~$F$ if and only if~$\sigma$ is not constant (i.e., not all three shared vertices receive the same color).
\end{lemma}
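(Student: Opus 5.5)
The proof will be a direct finite case analysis on the six-vertex digraph~$F$, read off from Figure~\ref{fig:gadget}. Its arcs are
\[
a\to b,\ b\to c,\ a\to c,\ a'\to b',\ b'\to c',\ c'\to a',\ c'\to a,\ c\to a',\ b'\to a,\ b\to c',\ a'\to b,\ c\to b'.
\]
The first step is to list the short directed cycles of~$F$ that will be used:
\[
\{a,b,c'\}\ (a\to b\to c'\to a),\qquad \{b,c,a'\}\ (b\to c\to a'\to b),\qquad \{a,c,b'\}\ (a\to c\to b'\to a),
\]
together with the auxiliary triangle $\{a',b',c'\}$ ($a'\to b'\to c'\to a'$). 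Each of the first three cycles contains exactly two shared vertices and one auxiliary vertex, and each auxiliary vertex occurs in exactly one of them.

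For the ``only if'' direction, suppose $\sigma$ is constant, say with value~$0$; the value~$1$ is symmetric. Each of the three mixed cycles already has two vertices of color~$0$. So in any extension to a $2$-dicoloring the remaining vertex must get color~$1$, which forces $c'$, $a'$ and $b'$ all to have color~$1$. But then the triangle $\{a',b',c'\}$ is a monochromatic directed cycle, a contradiction.

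For the ``if'' direction, note that swapping the two colors preserves being a dicoloring. Hence it suffices to handle the three maps $\sigma$ with exactly one shared vertex colored~$1$; the case of exactly one shared vertex colored~$0$ is its complement. In each case I will give an explicit extension: the unique auxiliary vertex forced by a mixed cycle receives color~$1$, and the other two auxiliary vertices receive color~$0$.
\begin{itemize}
  \item If $\sigma(a)=1$, take $a'=1$ and $b'=c'=0$. The class $\{a,a'\}$ spans no arcs. In the class $\{b,c,b',c'\}$ the arcs are $b\to c$, $b\to c'$, $c\to b'$, $b'\to c'$, and $c'$ has no out-arc inside the class, so this class is acyclic.
  \item If $\sigma(b)=1$, take $b'=1$ and $a'=c'=0$. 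The class $\{b,b'\}$ is arc-free. In $\{a,c,a',c'\}$ the arcs $c'\to a$, $c'\to a'$, $a\to c$, $c\to a'$ form an acyclic digraph, since $a'$ is a sink in this class.
  \item If $\sigma(c)=1$, take $c'=1$ and $a'=b'=0$. The class $\{c,c'\}$ is arc-free. In $\{a,b,a',b'\}$ the arcs $a'\to b'$, $a'\to b$, $b'\to a$, $a\to b$ form an acyclic digraph, since $b$ is a sink in this class.
\end{itemize}

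The only real obstacle is bookkeeping: reading the arc set correctly from the figure and confirming that no longer cycle survives in the proposed colour classes. Exhibiting a sink inside each $4$-vertex class (and iterating) settles acyclicity mechanically, so I would present the three extensions together with this sink check.
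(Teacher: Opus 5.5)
Your proof is correct. The arc list you read off matches Figure~\ref{fig:gadget} exactly, the three mixed triangles force $a',b',c'$ all to receive the opposite color of a constant $\sigma$ (producing the monochromatic triangle $a'\to b'\to c'\to a'$), and your three explicit extensions are acyclic in each class; with color-swap symmetry they cover every non-constant $\sigma$. The paper does not write this out but simply calls it a finite verification and cites Bokal et al., so your argument is the same approach, carried out explicitly.
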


\begin{proof}
	This is a finite verification; see~\cite[Theorem~3.1]{Bokal_2004_CircularChrom}.
\end{proof}

\begin{lemma}[Classical reduction]\label{lem:classical}
For every 3-uniform hypergraph~$H$, there is a directed graph~$D(H)$ such that~$H$ is 2-colorable if and only if $D(H)$~is 2-dicolorable.
\end{lemma}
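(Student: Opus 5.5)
The plan is to build $D(H)$ by gluing one copy of the template gadget~$F$ onto each hyperedge of~$H$, sharing the vertices of~$H$ and keeping the auxiliary vertices private to each copy. Concretely, I fix for each hyperedge $e\in E(H)$ an enumeration $e=\{x_e,y_e,z_e\}$; for definiteness I use the order induced by a fixed linear order on $V(H)$, anticipating the Borel lifting, where Lemma~\ref{lemma:toolkit}(6) will supply it. I then set
\[
V(D(H)) := V(H)\ \sqcup\ \bigl(E(H)\times T_{\mathrm{aux}}\bigr).
\]
The arcs are obtained by transporting $E_T$ along the map $\iota_e\colon T\to V(D(H))$ given by $a\mapsto x_e$, $b\mapsto y_e$, $c\mapsto z_e$, and $t'\mapsto (e,t')$ for $t'\in T_{\mathrm{aux}}$. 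So $A(D(H)) := \bigcup_e \iota_e(E_T)$. Each $\iota_e$ is injective, so $F_e:=\iota_e(F)$ is an isomorphic copy of~$F$. Moreover, two copies $F_e$, $F_{e'}$ with $e\neq e'$ meet only in the at most two hypergraph vertices common to $e$ and $e'$.

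For the forward direction, I take a 2-coloring $\varphi\colon V(H)\to 2$ in which no hyperedge is monochromatic. For each $e$, the restriction $\sigma_e:=\varphi\circ\iota_e|_{T_{\mathrm{shared}}}$ is non-constant, so Lemma~\ref{lemma:gadget-property} gives an extension $\tau_e$ of $\sigma_e$ to a 2-dicoloring of~$F$. I color the auxiliary vertex $(e,t')$ by $\tau_e(t')$ and keep $\varphi$ on $V(H)$; this is well defined because the auxiliary vertices are private to each copy. It remains to show that each color class is acyclic. Suppose for contradiction there is a monochromatic directed cycle~$C$.

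This is the main step. Every arc of~$D(H)$ lies inside a single copy~$F_e$. An auxiliary vertex $(e,t')$ has all of its arcs inside~$F_e$, so consecutive arcs at an auxiliary vertex stay in the same gadget. Hence $C$ splits into segments, each running inside one gadget from a shared vertex to a shared vertex, or lying entirely in one gadget. If $C$ lies in a single $F_e$, it contradicts that $\tau_e$ is a dicoloring. Otherwise $C$ passes through at least two shared vertices. The danger is that paths through different gadgets combine into a cycle. I would rule this out using the structure of $E_T$ as read off Figure~\ref{fig:gadget}. Among the shared vertices the arcs are $a\to b\to c$ and $a\to c$, which is transitive along the order $a<b<c$. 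I expect that every directed path in $F$ from one shared vertex to another either goes forward in this order, or, if it goes backward (say $c\to a'\to b'\to c'\to a$ or $c\to b'\to a$), uses auxiliary vertices. In the backward case I would check that monochromatic such paths force a monochromatic cycle already inside $F$ together with the forward shared arcs present in the same copy.

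The cleanest route is to verify the following finite claim from the figure: for any 2-dicoloring of $F$, no monochromatic directed path runs from a later shared vertex back to an earlier one. If this holds, then since all gadgets order their shared vertices compatibly with the global linear order on $V(H)$, every monochromatic segment of $C$ strictly increases in that order, and $C$ cannot close up. If the finite claim fails as stated, I would instead choose the extensions $\tau_e$ canonically, for example giving all auxiliary vertices color opposite to the majority of $\sigma_e$, and verify the claim only for those extensions. I expect this verification, reconciling the local gadget property with global acyclicity, to be the main obstacle.

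For the converse, I take a 2-dicoloring $c$ of $D(H)$. Its restriction to each $F_e$ is a 2-dicoloring of $F$, since subgraphs of acyclic sets are acyclic. By Lemma~\ref{lemma:gadget-property}, $c\circ\iota_e$ is then non-constant on $T_{\mathrm{shared}}$, so $c|_{V(H)}$ is a proper 2-coloring of~$H$.
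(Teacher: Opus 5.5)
Your construction (one copy of $F$ per hyperedge, with the shared labels assigned along a fixed global linear order) and your converse are the same as the paper's. The forward direction is where you differ, and your version is the more complete one. The paper asserts that, because auxiliary vertices of distinct gadgets are disjoint, any choice of local extensions combines into a global 2-dicoloring. Disjointness only makes the combined coloring well defined. It says nothing about a monochromatic cycle that leaves one gadget through a shared vertex and returns through another. Your segment decomposition is exactly the argument needed here, so you are supplying a step the paper's proof omits rather than taking a detour.

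You can drop the hedging, though. Your finite claim holds for \emph{every} 2-dicoloring of $F$, for the reason you half-state. The shared labels carry all three forward arcs $a\to b$, $b\to c$, $a\to c$. So if a monochromatic path in $F$ ran from a later shared label $y$ back to an earlier one $x$, appending the arc $x\to y$ would close it into a monochromatic directed cycle inside $F$.

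No canonical choice of extensions is needed. Your proposed fallback would fail anyway: $a'\to b'\to c'\to a'$ is a directed triangle, so the auxiliary vertices can never all receive one color.

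To finish, note that a cycle meeting at most one shared vertex is a single segment and so lies in one gadget. Otherwise the shared vertices met by a monochromatic cycle would strictly increase around it, which is absurd.

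It is also worth saying explicitly that the consistent global order is essential, not merely ``for definiteness''. Suppose two hyperedges containing $x,y$ listed them in opposite orders. Then $D(H)$ would contain the 2-cycle $x\to y\to x$, and a proper 2-coloring of $H$ giving $x$ and $y$ the same color would not extend.

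The same argument repairs Claim~\ref{claim:forward}, where the paper repeats the unjustified step. Cycles are finite and $\preceq$ is a linear order, so nothing changes.
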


\begin{proof}
Let $V = V(H)$ and fix a linear order on~$V$.
Define $$E_{\mathrm{sort}} = \bigl\{(v_1,v_2,v_3)\in E(H) : v_1 < v_2 < v_3\bigr\},$$
a canonical listing of hyperedges as ordered triples.
For each $$e=(v_1,v_2,v_3)\in E_{\mathrm{sort}},$$ place a copy~$F_e$ of the template gadget, identifying the shared labels $a,b,c$ with the hypergraph vertices $v_1,v_2,v_3$ and introducing fresh auxiliary vertices $(e,a'),(e,b'),(e,c')$.
The directed graph~$D(H)$ has vertex set $$V_D = V \cup \bigl\{(e,t) : e\in E_{\mathrm{sort}},\; t\in T_{\mathrm{aux}}\bigr\}$$
and arcs given by the union of the arcs of all copies~$F_e$, with each shared label mapped to the corresponding vertex of~$V$.

First, suppose that $c\colon V\to 2$ is a proper 2-coloring of the hypergraph~$H$.
For each~$e$, the induced coloring on the shared vertices of~$F_e$ is non-constant, so by Lemma~\ref{lemma:gadget-property} it extends to a 2-dicoloring of~$F_e$.
Since auxiliary vertices of distinct gadgets are disjoint, combining any choice of extensions with~$c$ on shared vertices gives a global 2-dicoloring of~$D(H)$.

Conversely, suppose that $\psi\colon V_D\to 2$ be a 2-dicoloring of~$D(H)$ and set $\varphi = \psi\restriction_V$.
For any $e = (v_1,v_2,v_3)\in E_{\mathrm{sort}}$, the restriction $\psi\restriction_{V(F_e)}$ is a 2-dicoloring of~$F_e$, so by the contrapositive of Lemma~\ref{lemma:gadget-property}, $\varphi(v_1),\varphi(v_2),\varphi(v_3)$ are not all equal.
Hence $\varphi$ is a 2-coloring of~$H$.
\end{proof}

\begin{remark}\label{rmk:reverse-simple}
The converse direction requires no selection or choice: $\varphi$ is simply the restriction of~$\psi$ to the original vertex set~$V$, which is a subset of~$V_D$ by construction.
This is be important in Section~\ref{sec:borel}, where it means that no uniformization argument is needed for the reverse direction of the Borel reduction.
\end{remark}

\section{The Borel reduction}\label{sec:borel}

I now verify that the construction of Section~\ref{sec:classical} lifts to the Borel setting.
The argument follows the template of~\cite[Theorem~A.9]{thornton2022algebraic}, where a similar verification for edge 3-coloring is carried out.

\begin{fact}[{\cite[Corollary~4.6]{thornton2022algebraic}}]\label{fact:hyp-borel}
The set of nice codes for locally finite Borel 3-uniform hypergraphs admitting a Borel 2-coloring is $\mathbf\Sigma^1_2$-complete.
\end{fact}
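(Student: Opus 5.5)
The plan is to prove the two halves of $\mathbf\Sigma^1_2$-completeness separately: membership in $\mathbf\Sigma^1_2$, and $\mathbf\Sigma^1_2$-hardness via a $\Delta^1_1$-in-the-codes reduction from a problem already known to be complete.

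\emph{Upper bound.} A locally finite Borel $3$-uniform hypergraph $H$ with nice code $h$ admits a Borel $2$-coloring if and only if
\begin{equation*}
  \exists e\in\omega\ \bigl[\,e\in\CC\ \wedge\ \DD_e\subseteq V(H)\ \text{ is a 2-coloring}\,\bigr],
\end{equation*}
where the coloring is read as the set of vertices of color~$0$. This uses the fact that every Borel set is $\Delta^1_1$ relative to some real parameter, which we absorb into the boldface setting.

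\begin{enumerate}
  \item The clause $e\in\CC$ is $\Pi^1_1$.
  \item The clause ``no hyperedge of $H$ is monochromatic'' has the form $\forall x\,\forall y\,\forall z\ [\ldots]$. Its matrix can use either $\DD^\Pi$ or $\DD^\Sigma$, since these agree on codes. Choosing the appropriate one makes the whole clause $\Pi^1_1$.
  \item Existentially quantifying over $e$ (and, in the boldface version, over a real parameter) then gives a $\mathbf\Sigma^1_2$ set.
\end{enumerate}

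\emph{Hardness.} I would start from the Todorčević--Vidnyánszky theorem \cite{Todorcevic_2021_ComplexProbBorel}: the set of codes of locally finite, in fact bounded-degree closed, Borel graphs admitting a Borel $3$-coloring is $\mathbf\Sigma^1_2$-complete. From there I would compose with a finite gadget reduction from graph $3$-coloring to NAE-$3$-SAT, i.e.\ $3$-uniform hypergraph $2$-coloring. This reduction exists classically because the template $\{0,1\}$ with the relation $\{0,1\}^3$ minus the two constant triples has only essentially unary polymorphisms (projections and negation). Hence it pp-constructs $K_3$, and this pp-construction is a fixed finite recipe:
\begin{itemize}
  \item each vertex $v$ is replaced by a finite tuple of new vertices $(v,i)$, $i<m$;
  \item each edge $\{u,v\}$ is replaced by a fixed finite hypergraph gadget on $\{(u,i),(v,i)\}$ together with finitely many fresh auxiliary vertices $(\{u,v\},j)$.
\end{itemize}
To turn this into a Borel statement, I would do the following.
\begin{enumerate}
  \item Index edges canonically as ordered pairs $u\prec v$ using a fixed $\Delta^1_1$ linear order, exactly as $E_{\mathrm{sort}}$ is used in Lemma~\ref{lem:classical}.
  \item Build the new vertex set and hyperedge set from products, unions and images under countable-to-one maps, which is $\Delta^1_1$ in the codes by Lemma~\ref{lemma:toolkit}(1),(3),(5). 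Local finiteness is preserved because each gadget is finite and $G$ is locally finite.
  \item Verify that the reduction preserves Borel colorability in both directions.
\end{enumerate}

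\emph{Forward direction.} Given a Borel $3$-coloring $c$ of $G$, I must produce a Borel $2$-coloring of the gadget hypergraph.
\begin{itemize}
  \item Vertex tuples are colored by a fixed encoding of $c(v)$.
  \item Auxiliary vertices are colored by choosing, for each edge, the lexicographically least extension among the finitely many valid gadget colorings. This choice depends only on $(c(u),c(v))$, so it is Borel. Lemma~\ref{lemma:toolkit}(6) covers any finite-section enumeration that is needed.
\end{itemize}

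\emph{Reverse direction.} A Borel $2$-coloring $\psi$ is decoded on each tuple $\bigl((v,i)\bigr)_{i<m}$ by a fixed finite decoding map, as in Remark~\ref{rmk:reverse-simple}. The decoding must be chosen to be invariant under global negation, since the template's only nontrivial polymorphism is negation. It then yields a proper Borel $3$-coloring of $G$.

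\emph{Expected main obstacle.} The delicate step is writing down the concrete gadget and checking that the map on codes is genuinely $\Delta^1_1$, rather than merely Borel on each instance. The fallback is to appeal to Thornton's general theorem \cite{thornton2022algebraic}, which says that Borel CSPs whose template lacks suitable polymorphisms are $\mathbf\Sigma^1_2$-complete, and to verify that NAE-$3$-SAT satisfies its hypotheses.
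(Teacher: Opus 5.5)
The paper gives no argument for this statement. It imports it from Thornton~\cite{thornton2022algebraic} and uses it as a black box, which is exactly your stated fallback.

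Your main route re-derives it instead, and the outline is sound. Your upper bound quantifies existentially over relativized codes of colorings and makes the matrix $\Pi^1_1$ by choosing $\DD^\Pi$ or $\DD^\Sigma$ according to polarity. That is the same computation the paper later performs for 2-dicolorings in the proof of Proposition~\ref{prop:main}. Your hardness argument composes the Todor\v{c}evi\'c--Vidny\'anszky theorem with a $\Delta^1_1$-in-the-codes lift of a finite gadget reduction from $K_3$ to not-all-equal constraints. This uses the same template the paper applies in Section~\ref{sec:borel} to $D(H)$: a sorted index set, products, unions, and finite-to-one images. Your reverse direction, a fixed finite decoding on each vertex tuple, mirrors Remark~\ref{rmk:reverse-simple}.

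What this buys is transparency: the only external inputs are that theorem and Lemma~\ref{lemma:toolkit}. The cost is three checks that the citation hides.

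(i) You genuinely need the bounded-degree (or at least locally finite) form of the Todor\v{c}evi\'c--Vidny\'anszky theorem. From a locally countable version, your construction would only give hardness for locally countable hypergraphs, not the locally finite ones in the statement.

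(ii) A pp-construction over $(\{0,1\};\mathrm{NAE})$ may use atoms with repeated variables, such as $\mathrm{NAE}(x,x,y)$ (that is, $x\neq y$), or equality atoms. These are not hyperedges of a 3-uniform hypergraph and must be simulated by finite gadgets with 3-element hyperedges. One option is the Fano plane minus one line, whose 2-colorings are exactly those making the removed line monochromatic. Putting $x,y$ on that line forces $x=y$. Adding the hyperedge $\{x,x',w\}$, with $x'$ another point of the line, forces $w\neq x$.

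(iii) The negation-invariance requirement on the decoding is a red herring. The reverse direction only needs the fixed map $\{0,1\}^m\to 3$ to be a homomorphism from the pp-power to $K_3$. Invariance under global negation is neither required nor automatic.
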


It therefore suffices to produce a $\Delta^1_1$ reduction from the set of codes for Borel 2-colorable 3-uniform hypergraphs to the set of codes for Borel 2-dicolorable directed graphs.

\subsection{The construction is \texorpdfstring{$\Delta^1_1$}{delta-one-one} in the codes}

Let $H$ be a locally finite Borel 3-uniform hypergraph with vertex set $V\subseteq\NN$ and hyperedge set $E\subseteq V^3$.
We fix a $\Delta^1_1$ linear order~$\preceq$ on~$\NN$ and encode the template gadget~$F$, its label set~$T$, and its arc set~$E_T$ by canonical finite sequences in~$\NN$.

Begin by sorting the hyperedges.
$$E_{\mathrm{sort}} = \{(a,b,c)\in E : a\preceq b\preceq c\}.$$
This is obtained from~$E$ by intersection with the graph of~$\preceq$, hence is $\Delta^1_1$ on codes by Lemma~\ref{lemma:toolkit}(2).

Then, build the vertex set.
Define $V_D = V \cup \bigl(E_{\mathrm{sort}}\times T_{\mathrm{aux}}\bigr)$, which is a union of~$V$ with a product, hence $\Delta^1_1$ on codes by Lemma~\ref{lemma:toolkit}(1),(3).

Finally, build the arc set.
For each $e = (v_1,v_2,v_3)\in E_{\mathrm{sort}}$ and each template arc $(u,v)\in E_T$, define the \emph{substitution map} $\varphi_H\colon E_{\mathrm{sort}}\times E_T\to V_D^2$ that sends a pair $(e,(u,v))$ to the corresponding arc of~$D(H)$, by replacing each label $\ell\in T$ with its realization in~$D(H)$:
\begin{itemize}[nosep]
	\item if $\ell\in T_{\mathrm{shared}}$ is the shared label in position~$i$ (i.e., $\ell = a,b,c$ for $i=1,2,3$ respectively), it maps to the vertex $v_i\in V$ occurring in $e=(v_1,v_2,v_3)$;
	\item if $\ell\in T_{\mathrm{aux}}$, it maps to $(e,\ell)\in E_{\mathrm{sort}}\times T_{\mathrm{aux}}$.
\end{itemize}
This substitution is a recursive operation on the components of the tuple $(e,(u,v))$: it extracts coordinates of~$e$ (a projection) or forms pairs with~$e$ (a product), both of which are $\Delta^1_1$ operations.
Hence $\varphi_H$ is $\Delta^1_1$ in the code of~$H$ and in the (fixed, computable) code of~$E_T$.

The arc set is $A_D = \varphi_H(E_{\mathrm{sort}}\times E_T)$.
To apply Lemma~\ref{lemma:toolkit}(5), we need $\varphi_H$ to be countable-to-one.
This holds: for any arc $\alpha\in A_D$, the preimage $\varphi_H^{-1}(\alpha)$ is finite, since $E_T$ is finite and, by local finiteness of~$H$, each vertex of~$V$ appears in only finitely many hyperedges in~$E_{\mathrm{sort}}$.
By Lemma~\ref{lemma:toolkit}(1),(3),(5), the arc set~$A_D$ is $\Delta^1_1$ on codes.

Applying the simple-to-nice code translation from the coding framework (Definition~\ref{def:simple-nice-codes}(v)), we obtain:

\begin{lemma}\label{lem:code-reduction}
	The function that maps the code of $H$ to the code of $D(H)$ is $\Delta^1_1$.
\end{lemma}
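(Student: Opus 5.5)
The plan is to assemble the pieces already verified in this subsection into a single map on codes and check that each stage is $\Delta^1_1$. The proof works uniformly in the code $n\in\CC$ of~$H$. Write $n\mapsto(n_V,n_E)$ for the recursive extraction of codes for $V$ and $E$. Each stage then applies one of the operations of Lemma~\ref{lemma:toolkit} to codes already produced, together with fixed recursive codes for $T_{\mathrm{aux}}$, $E_T$ and the graph of~$\preceq$.

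Concretely, I would proceed in three steps, composing the operations in this order:
\begin{enumerate}[label=(\roman*)]
	\item $n_E\mapsto$ a code for $E_{\mathrm{sort}}$, by intersecting with the graph of~$\preceq$ (Lemma~\ref{lemma:toolkit}(2)).
	\item A code for $V_D$, by a product with $T_{\mathrm{aux}}$ followed by a union with $V$ (Lemma~\ref{lemma:toolkit}(3),(1)).
	\item A code for $A_D=\varphi_H(E_{\mathrm{sort}}\times E_T)$, by (3) and then (5).
\end{enumerate}
The one point needing care is applying (5). There $\varphi_H$ must itself be given by a code obtained $\Delta^1_1$-uniformly from~$n$. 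I would note that $\varphi_H$ is the restriction, to the $\Delta^1_1$ set $E_{\mathrm{sort}}\times E_T$, of a single fixed recursive function $\Phi$ on $\NN\times\NN$ that performs the label substitution (projections and pairings). A code for $\varphi_H$ is therefore the intersection of a fixed recursive code for $\mathrm{graph}(\Phi)$ with the product of the domain code and $\NN$, again via (2),(3). Countable-to-oneness follows from the finiteness argument already given: $E_T$ is finite and $H$ is locally finite.

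The main obstacle is a subtlety of uniformity. The toolkit operations are $\Delta^1_1$ only as partial maps on their intended domains, for example (5) only when $f$ is countable-to-one. So I must check that for every $n\in\CC$ coding a locally finite 3-uniform hypergraph the hypotheses hold, and I must also deal with inputs outside this set. I would handle this in the standard way of~\cite[Theorem~A.9]{thornton2022algebraic}. The set of codes for locally finite 3-uniform hypergraphs is $\Pi^1_1$. Off this set the output may be arbitrary, or a fixed code for a non-2-dicolorable digraph, since the reduction only needs to be correct on this domain relative to the $\mathbf\Sigma^1_2$-complete set of Fact~\ref{fact:hyp-borel}. Composition of $\Delta^1_1$ functions is $\Delta^1_1$.

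Finally, the operations naturally produce pairs $(\Pi^1_1,\Sigma^1_1)$ of indices, that is, simple codes. Definition~\ref{def:simple-nice-codes}(v) supplies a recursive translation back to a nice code, so the composite $n\mapsto\mathrm{code}(D(H))$ is $\Delta^1_1$, as claimed.
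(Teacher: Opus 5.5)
Your proposal is correct and follows essentially the same route as the paper. Both build $E_{\mathrm{sort}}$ by intersecting with the graph of $\preceq$, build $V_D$ by a product and a union, obtain $A_D$ as the countable-to-one image $\varphi_H(E_{\mathrm{sort}}\times E_T)$ (justified by finiteness of $E_T$ and local finiteness of $H$), and finish with the simple-to-nice code translation. Your explicit coding of $\varphi_H$ as the restriction of a fixed recursive substitution map $\Phi$, via items (2) and (3) of Lemma~\ref{lemma:toolkit}, and your remarks on the $\Pi^1_1$ domain of valid codes only spell out details the paper leaves implicit.
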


\subsection{Borel colorability is preserved}

\begin{claim}\label{claim:forward}
	If $H$ is Borel 2-colorable, then $D(H)$ is Borel 2-dicolorable.
\end{claim}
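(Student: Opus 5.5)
Starting from a Borel proper 2-coloring $c\colon V\to 2$ of~$H$, I will extend it to $V_D$ by choosing colors for the auxiliary vertices uniformly by a finite lookup table, so that no selection or uniformization is needed. For each non-constant $\sigma\colon\{a,b,c\}\to 2$ (there are six), Lemma~\ref{lemma:gadget-property} gives an extension $\tau_\sigma\colon T\to 2$ of~$\sigma$ that 2-dicolors~$F$; fix one such $\tau_\sigma$ for each~$\sigma$ once and for all. Then define $\psi\colon V_D\to 2$ by $\psi\restriction_V = c$ and, for $e=(v_1,v_2,v_3)\in E_{\mathrm{sort}}$ and $t\in T_{\mathrm{aux}}$, $\psi(e,t)=\tau_{\sigma_e}(t)$, where $\sigma_e$ is the map sending $a,b,c$ to $c(v_1),c(v_2),c(v_3)$. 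Since $c$ is proper, each $\sigma_e$ is non-constant, so this is well defined.

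Borelness: on $V$, $\psi$ is $c$, which is Borel. On $E_{\mathrm{sort}}\times T_{\mathrm{aux}}$, $\psi$ is the composition of the Borel map $(e,t)\mapsto (c(v_1),c(v_2),c(v_3),t)$ (projections composed with~$c$) with a function on a finite set. Since $V$ and $E_{\mathrm{sort}}\times T_{\mathrm{aux}}$ are disjoint Borel pieces of $V_D$ (this disjointness is a coding convention I will check, or else enforce by tagging), $\psi$ is Borel.

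Dicoloring: this is the main point, since acyclicity is a global condition and the gadget lemma is only local. I must show that no color class of $D(H)$ contains a directed cycle, including cycles that pass through several gadgets via shared vertices. The classical proof of Lemma~\ref{lem:classical} asserts that gluing works; I would justify it by inspecting $F$. First I check whether in $F$ every shared vertex is either only a source or only a sink relative to the auxiliary part. Judging from Figure~\ref{fig:gadget}, this is false ($a$ both receives from $c'$ and sends to $b$), so a monochromatic cycle could in principle weave through several gadgets.

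The remedy is to choose the $\tau_\sigma$ carefully. Since all arcs between shared vertices go $a\to b\to c$ and $a\to c$, which respects the order $\preceq$, any monochromatic cycle must leave $V$ and use auxiliary vertices. My plan is to choose each $\tau_\sigma$ so that no monochromatic directed path inside $F$ runs from a shared vertex, through auxiliary vertices, back to a shared vertex. Then every monochromatic directed cycle in $D(H)$ lies either within $V$, where the arcs only go up along $\preceq$, or within a single gadget's auxiliary vertices. Both are impossible: the first because $\preceq$ is a linear order, the second because each $\tau_\sigma$ dicolors~$F$. Checking that such $\tau_\sigma$ exist for all six $\sigma$ is a finite verification on Figure~\ref{fig:gadget}. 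I expect this step to be the main obstacle, and if it fails I would fall back on the precise global argument of~\cite[Theorem~3.1]{Bokal_2004_CircularChrom}.
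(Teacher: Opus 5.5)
Your argument is correct, and it differs from the paper's in two respects. For the selection, the paper applies Luzin--Novikov to the set $S\subseteq E_{\mathrm{sort}}\times\mathcal{C}_F$ of admissible gadget colorings. Your fixed table $\sigma\mapsto\tau_\sigma$ makes uniformization unnecessary: the admissible extensions on $F_e$ depend only on $\sigma_e$, which takes six values, so $\psi$ is Borel as a composition of $c$ with projections and a finite function. (This also shows that the sections $S_e\subseteq\mathcal{C}_F$ are finite regardless of local finiteness.)

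More substantively, the paper concludes that the glued map is a 2-dicoloring only ``since auxiliary vertices of distinct gadgets are disjoint.'' That does not rule out a monochromatic cycle passing through several gadgets via shared vertices, and the same step is skipped in Lemma~\ref{lem:classical}. You identify this correctly, and your reduction is the right one. All arcs inside $V$ have the form $v_1\to v_2$, $v_2\to v_3$, $v_1\to v_3$, so they increase in $\preceq$. Hence it suffices that no gadget contains a monochromatic path from a shared vertex through auxiliary vertices to a shared vertex.

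The finite check you left open succeeds, and no careful choice is needed: every 2-dicoloring of $F$ extending a non-constant $\sigma$ has this property. The arcs from shared to auxiliary vertices are $b\to c'$, $c\to a'$, $c\to b'$, and those back are $c'\to a$, $b'\to a$, $a'\to b$. No auxiliary path starts at $a$ or ends at $c$. So such a path has one of four forms:
\begin{enumerate}
\item it returns to $b$, and is then the cycle $b\to c'\to a'\to b$ of $F$;
\item it runs from $b$ to $a$ through $c'$, the only auxiliary out-neighbor of $b$;
\item it runs from $c$ to $a$ through $b'$, since the only auxiliary successor of $a'$ is $b'$, and $c'$ is entered only from $b'$ or $b$;
\item it runs from $c$ to $b$ through $a'$, the only auxiliary in-neighbor of $b$.
\end{enumerate}
Suppose the endpoints share a color. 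Then the directed triangles $a\to b\to c'\to a$, $a\to c\to b'\to a$ and $b\to c\to a'\to b$ force $c'$, $b'$ and $a'$ respectively to take the other color. So the paper's arbitrary selection is also valid, but only because of this verification.

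One slip: $a$ sends no arc to an auxiliary vertex, since $a\to b$ is a shared arc. The shared vertex that both receives from and sends to the auxiliary part is $b$ (via $a'\to b$ and $b\to c'$), and that is what makes the check necessary.
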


\begin{proof}
Let $c\colon V\to 2$ be a Borel 2-coloring of~$H$.
For each $e\in E_{\mathrm{sort}}$, the restriction of~$c$ to the shared vertices of~$F_e$ is non-constant, so by the gadget property (Lemma~\ref{lemma:gadget-property}) there exists at least one extension to a 2-dicoloring of~$F_e$.
Denote by $\CF$ the finite set of maps $T\to 2$ and define $S$ as the set of all $(e,\sigma)\in E_{\mathrm{sort}}\times\CF$ such that $\sigma$ is a 2-dicoloring of $F_e$ extending the shared-vertex colors.
The set~$S$ is Borel since the defining condition involves only finitely many acyclicity checks and color-matching conditions, all determined by the Borel coloring~$c$ and the fixed finite template.
Furthermore, $S$ has finite nonempty sections~$S_e$.
By the Luzin--Novikov uniformization theorem, there is a Borel selector $e\mapsto\sigma(e)\in S_e$.

Since auxiliary vertices of distinct gadgets are disjoint, the map $\psi\colon V_D\to 2$ defined by
$$\psi(v) =\begin{cases}c(v),&v\in V;\\
	\sigma(e)(t),&(e,t)\in E_{\mathrm{sort}}\times T_{\mathrm{aux}}\end{cases}$$
is a well-defined Borel 2-dicoloring of~$D(H)$.
\end{proof}

\begin{claim}\label{claim:reverse}
If $D(H)$ is Borel 2-dicolorable, then $H$ is Borel 2-colorable.
\end{claim}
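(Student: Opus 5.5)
The plan is to take a Borel $2$-dicoloring $\psi\colon V_D\to 2$ of $D(H)$ and set $\varphi:=\psi\restriction_V$, exactly as in the converse direction of Lemma~\ref{lem:classical}. As Remark~\ref{rmk:reverse-simple} stresses, no selection is needed: $V\subseteq V_D$ by construction, so $\varphi$ is simply a restriction.

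\emph{Borelness.} The first step is to check that $\varphi$ is Borel. Its domain $V$ is a Borel subset of $V_D$ (it is one piece of the union defining $V_D$). For each $i\in 2$ the preimage $\varphi^{-1}(i)=\psi^{-1}(i)\cap V$ is therefore Borel. Here one should make sure the encoding of $V_D$ keeps $V$ disjoint from $E_{\mathrm{sort}}\times T_{\mathrm{aux}}$, for instance by tagging the two summands. If the paper's union is literal, I would note that $V$ is still recovered as a Borel subset, so nothing changes.

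\emph{Properness.} The second step is to verify that $\varphi$ is a proper $2$-coloring of $H$. Fix a hyperedge of $H$ and list it in $\preceq$-increasing order as $e=(v_1,v_2,v_3)\in E_{\mathrm{sort}}$; here $3$-uniformity gives three distinct vertices, so the sorted triple exists and is strictly increasing. The substitution map $\varphi_H$ places every arc of the copy $F_e$ into $A_D$, and it sends $a,b,c$ to $v_1,v_2,v_3$ and the auxiliary labels to the fresh vertices $(e,t)$. The copy $F_e$ is therefore a subdigraph of $D(H)$ isomorphic to $F$.

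Any directed cycle inside a color class of $\psi\restriction_{V(F_e)}$ would be a monochromatic directed cycle in $D(H)$. So $\psi\restriction_{V(F_e)}$ is a $2$-dicoloring of $F_e$. By Lemma~\ref{lemma:gadget-property}, the values $\varphi(v_1),\varphi(v_2),\varphi(v_3)$ are then not all equal. Since every hyperedge has a sorted representative, $\varphi$ is a Borel $2$-coloring of $H$.

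\emph{Main point of care.} I expect the only delicate step to be confirming that $F_e$ genuinely embeds as a copy of $F$ under $\varphi_H$. This needs $v_1,v_2,v_3$ to be distinct and distinct from all auxiliary vertices, which is what makes the template's acyclicity constraints transfer verbatim; everything else is immediate. Combining this claim with Claim~\ref{claim:forward}, Lemma~\ref{lem:code-reduction} and Fact~\ref{fact:hyp-borel} then yields Proposition~\ref{prop:main}.
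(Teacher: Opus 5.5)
Your proposal is correct and follows the paper's proof. You restrict $\psi$ to the Borel set $V\subseteq V_D$, note that each gadget $F_e$ is a subdigraph of $D(H)$ so that $\psi$ restricts to a $2$-dicoloring of it, and apply the gadget property to $v_1,v_2,v_3$. The step you single out as delicate is not actually needed here: for fixed $e$ the label substitution is a digraph homomorphism $F\to D(H)$ by construction of $A_D$, and pulling $\psi$ back along any homomorphism already gives a $2$-dicoloring of $F$ (a monochromatic directed cycle maps to a monochromatic closed directed walk, which contains a monochromatic directed cycle), so neither injectivity nor disjointness from the auxiliary vertices is required.
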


\begin{proof}
Let $\psi\colon V_D\to 2$ be a Borel 2-dicoloring of~$D(H)$ and define $\varphi = \psi\restriction_V$.
Since $V\subseteq V_D$ is Borel, $\varphi$~is Borel.
For any hyperedge $e=(v_1,v_2,v_3)\in E_{\mathrm{sort}}$, the gadget~$F_e$ is a directed subgraph of~$D(H)$, so $\psi\restriction_{V(F_e)}$ is a 2-dicoloring of~$F_e$, and the gadget property implies that $\varphi(v_1),\varphi(v_2),\varphi(v_3)$ are not all equal.
Hence~$\varphi$ is a Borel 2-coloring of~$H$.
\end{proof}

\begin{remark}\label{rmk:no-lf-reverse}
Note that the reverse direction (Claim~\ref{claim:reverse}) uses neither the local finiteness of~$H$ nor any uniformization.
Local finiteness is needed only for two purposes: it is a hypothesis of Fact~\ref{fact:hyp-borel}, and it guarantees that the sections~$S_e$ in the forward direction are finite (enabling the application of Luzin--Novikov).
\end{remark}

\subsection{Proof of the main results}

\begin{proof}[Proof of Proposition~\ref{prop:main}]
By Fact~\ref{fact:hyp-borel}, the set of nice codes for locally finite Borel 3-uniform hypergraphs with Borel 2-colorings is $\mathbf\Sigma^1_2$-complete.
By Lemma~\ref{lem:code-reduction}, the map $\mathrm{code}(H)\mapsto\mathrm{code}(D(H))$ is~$\Delta^1_1$.
By Claims~\ref{claim:forward} and~\ref{claim:reverse}, $H$~is Borel 2-colorable if and only if $D(H)$~is Borel 2-dicolorable.
Thus the set of nice codes for Borel 2-dicolorable directed graphs is $\mathbf\Sigma^1_2$-hard.

For the upper bound, note that a directed graph~$D$ (given by a nice code) admits a Borel 2-dicoloring if and only if there exists a nice code~$e_c$ such that $\DD_{e_c}$ defines a valid 2-dicoloring of~$D$.
The condition ``$\DD_{e_c}$ is a 2-dicoloring'' is $\Pi^1_1$ in the codes $e_c$ and $e_D$, as it requires totality and that each color class induces no directed cycle, both of which are universal conditions.
The existential quantification over~$e_c$ gives a $\Sigma^1_2$ condition.
Hence the set of codes for Borel 2-dicolorable directed graphs is $\mathbf\Sigma^1_2$-complete, and its complement, which are codes for directed graphs with $\vec\chi_B(D)\geq 3$, is $\mathbf\Pi^1_2$-complete.
\end{proof}

\begin{remark}\label{rmk:sigma-pi-convention}
The convention used in~\cite{Todorcevic_2021_ComplexProbBorel} and~\cite{thornton2022algebraic} is to state the $\mathbf\Sigma^1_2$-completeness of the class of graphs \emph{admitting} a coloring.
We follow this convention.
\end{remark}

\begin{proof}[Proof of Corollary~\ref{cor:no-basis}]
Suppose a countable basis $\mathbf{B} = \{H_1,H_2,\dots\}$ exists, so that
$$
	\vec\chi_B(D) \geq 3
	\quad\Longleftrightarrow\quad
	\exists\, H_n\in\mathbf{B},\ \exists\text{ Borel homomorphism } H_n\to D.
$$
For each fixed~$H_n$, the condition ``there exists a Borel homomorphism $H_n\to D$'' is $\mathbf\Sigma^1_2$ in the code for~$D$: one existential quantifier over a nice code~$e_f$ for a Borel function $f\colon V(H_n)\to V(D)$, and the requirement that~$f$ preserves arcs is $\Pi^1_1$ in the codes.

The countable union $\bigcup_n \{D : H_n\to_B D\}$ is therefore~$\mathbf\Sigma^1_2$.
And Proposition~\ref{prop:main} asserts that the $\{D : \vec\chi_B(D)\geq 3\}$ is $\mathbf\Pi^1_2$-complete.
A set that is simultaneously $\mathbf\Sigma^1_2$ and $\mathbf\Pi^1_2$-hard would give $\mathbf\Pi^1_2 \subseteq \mathbf\Sigma^1_2$, contradicting the fact that the projective hierarchy is strictly ordered.
\end{proof}

\begin{remark}\label{rmk:complexity-lower-bound}
The argument proves more than the non-existence of a countable basis.
If $\mathbf{B}$ is any basis for the class of Borel directed graphs with $\vec\chi_B(D)\geq 3$, then the right-hand side of
$$
	\vec\chi_B(D)\geq 3 \quad\Longleftrightarrow\quad \exists\, H\in\mathbf{B},\ \exists\text{ Borel homomorphism }H\to D
$$
must define a $\mathbf\Pi^1_2$-complete set, since the left-hand side is $\mathbf\Pi^1_2$-complete by Proposition~\ref{prop:main}.
If $\mathbf{B}\in\mathbf\Sigma^1_2$ (viewed as a subset of the code space), then ``$H\in\mathbf{B}$'' is a $\mathbf\Sigma^1_2$ condition on the code of~$H$, and ``there exists a Borel homomorphism $H\to D$'' is $\mathbf\Sigma^1_2$ in the codes of~$H$ and~$D$ (as in the proof of Corollary~\ref{cor:no-basis}).
The conjunction of two $\mathbf\Sigma^1_2$ conditions is $\mathbf\Sigma^1_2$, and existential quantification preserves $\mathbf\Sigma^1_2$, so the right-hand side would be $\mathbf\Sigma^1_2$ in the code of~$D$.
But this contradicts $\mathbf\Pi^1_2$-completeness of the left-hand side, since $\mathbf\Sigma^1_2\neq\mathbf\Pi^1_2$.
Hence $\mathbf{B}\notin\mathbf\Sigma^1_2$: any basis must be at least $\mathbf\Pi^1_2$-hard as a subset of the code space, and is therefore at least as complex to describe as the set of all directed graphs without a Borel acyclic 2-coloring.
The separation $\mathbf\Sigma^1_2\neq\mathbf\Pi^1_2$ used here follows from $\Pi^1_1$-determinacy, which is provable from standard large cardinal hypotheses.
\end{remark}

\section{Questions}\label{sec:questions}
 
This section collects several natural questions suggested by our results, the work of Raghavan and Xiao~\cite{RaghavanXiao2024}, and suggestions of Thornton.
 
\begin{enumerate}[label=\textbf{Q\arabic*.},leftmargin=*]
	\item \textbf{Minimality of the Raghavan--Xiao basis.}
	Raghavan and Xiao~\cite{RaghavanXiao2024} produce a continuum-size family of directed graphs that serves as a basis for Borel directed graphs with uncountable Borel dichromatic number.
	This family consists of pairwise Borel-incomparable directed graphs.
	\emph{Does every basis for this class have size~$\mathfrak{c}$?}
	Raghavan has noted (personal communication) that the directed graphs in his family are pairwise non-homomorphic, but it is unknown whether a smaller basis exists.
	
	\item \textbf{Structural restrictions.}
	The result presented here applies to the full class of Borel directed graphs.
	\emph{Does a countable basis exist when one restricts to structurally simpler classes, say locally finite acyclic Borel directed graphs, or directed graphs generated by a single Borel function?}
	In the undirected case, Miller~\cite{Miller_2008_MeasurableChromaticNumbers} proved that there is no countable $\leq_B$-basis for graphs of the form~$G_f$ (where $f$ is a Borel function) with $\chi_B(G_f)\geq 3$.
	The analogous question for directed graphs generated by a single Borel function and dichromatic number remains open.
	
	\item \textbf{Borel dichromatic number and Borel order dimension.}
	Raghavan and Xiao~\cite{RaghavanXiao2024} established a close connection between Borel dichromatic number and a notion of Borel order dimension for Borel quasi-orders.
	\emph{Does the complexity result of Proposition~\ref{prop:main} yield new complexity results for Borel order dimension?}
	
	\item \textbf{Dichotomies for complete multipartite graphs.}
	It is known that checking whether a Borel graph in which every connected component is complete $k$-partite admits a Borel $k$-coloring is~$\Pi^1_1$, but no satisfying dichotomy theorem is known for this problem.
	Thornton has suggested (personal communication) that a finite basis may exist for each~$k$, though its size appears to grow faster than~$n!$.
	\emph{Is there an explicit finite basis for each~$k$?  What is its growth rate?}
	
	\item \textbf{Bounded width CSPs.}
	Among the constraint satisfaction problems studied in the Borel setting, the \emph{bounded width} problems occupy a distinguished position.
	Thornton~\cite{thornton2022algebraic} showed that the structures whose every solvable Borel instance has a Borel solution are exactly the width~$1$ structures, and proved partial complexity results for certain bounded width structures.
	Recent work of Grebík and Vidnyánszky~\cite{GV2025} shows that the split between easy and hard problems lies at a different place in the Borel setting than in the classical CSP dichotomy: in particular, there is no dichotomy for any Borel CSP that is not of bounded width.
	However, sharp complexity bounds within bounded width remain known only in very special cases.
	\emph{Can the exact complexity dividing line within bounded width CSPs be determined?}
	This problem appears to be very difficult; even identifying good illustrative examples of bounded width CSPs beyond the well-studied special cases remains open.
	
	\item \textbf{Measurable arboricity.}
	In the undirected setting, the \emph{arboricity} of a graph (the minimum number of acyclic sets needed to cover the vertex set) is the undirected analogue of the dichromatic number.
	Several basic questions about arboricity in measurable combinatorics remain open.
	For instance:
	\emph{Can the $\mu$-measurable arboricity of a probability-measure-preserving graph differ from its (classical) arboricity by more than~$1$?}
	And: \emph{What is the $\mu$-measurable arboricity of the $k$-th power of the Bernoulli shift graph of~$F_2$?}
\end{enumerate}

\subsection*{Acknowledgments}

I thank Dilip Raghavan for suggesting that the NP-completeness proof for dichromatic number could be relevant to the Borel setting, my doctoral advisor Spencer Unger for pointing me towards Thornton's coding framework, and Riley Thornton for confirming that 2-dicoloring is not a CSP in the sense of~\cite{thornton-thesis}, advising that the approach of~\cite[Appendix~A]{thornton2022algebraic} should be followed instead, and suggesting several of the questions in Section~\ref{sec:questions}.

\nocite{*}
\bibliographystyle{alphaurl}
\bibliography{bibliography/references}

\end{document}